\documentclass[oneside,onecolumn,12pt]{article}

\usepackage{blindtext} 

\usepackage[sc]{mathpazo} 
\usepackage[T1]{fontenc} 
\linespread{1.05} 
\usepackage{microtype} 
\usepackage{amsmath}
\usepackage{amsthm}
\usepackage{amsfonts}

\usepackage{mathrsfs}
\usepackage{txfonts}

\usepackage{palatino,amssymb,epsfig,latexsym}

\usepackage{verbatim}

\usepackage{graphicx}
\usepackage{graphics}

\usepackage{tikz}
\usepackage[english]{babel} 

\usepackage[hmarginratio=1:1,top=32mm,columnsep=20pt]{geometry} 
\usepackage[hang, small,labelfont=bf,up,textfont=it,up]{caption} 
\usepackage{booktabs} 

\usepackage{lettrine} 

\usepackage{enumitem} 
\setlist[itemize]{noitemsep} 

\usepackage{abstract} 

\usepackage{titlesec} 
\titleformat{\section}[block]{\large\scshape\centering}{\thesection.}{1em}{} 
\titleformat{\subsection}[block]{\large}{\thesubsection.}{1em}{} 


\usepackage{titling} 


\usetikzlibrary{patterns}  
\usetikzlibrary{calc}
\usetikzlibrary{through}
\usetikzlibrary{chains}
\usetikzlibrary{decorations.pathreplacing,decorations.markings}
 \tikzset{
  on each segment/.style={
    decorate,
    decoration={
      show path construction,
      moveto code={},
      lineto code={
        \path [#1]
        (\tikzinputsegmentfirst) -- (\tikzinputsegmentlast);
      },
      curveto code={
        \path [#1] (\tikzinputsegmentfirst)
        .. controls
        (\tikzinputsegmentsupporta) and (\tikzinputsegmentsupportb)
        ..
        (\tikzinputsegmentlast);
      },
      closepath code={
        \path [#1]
        (\tikzinputsegmentfirst) -- (\tikzinputsegmentlast);
      },
    },
  },
  mid arrow/.style={postaction={decorate,decoration={
        markings,
        mark=at position .5 with {\arrow[#1]{stealth}}
      }}},
}
\usetikzlibrary{arrows}

\newtheorem{theorem}{Theorem}[section]
\newtheorem{lemma}[theorem]{Lemma}

\newtheorem{proposition}[theorem]{Proposition}
\newtheorem{corollary}[theorem]{Corollary}

\newtheorem{remark}[theorem]{Remark}
\makeatletter

\newcommand{\Rmnum}[1]{\expandafter\@slowromancap\romannumeral #1@}
\makeatother

\numberwithin{equation}{section}

\setlength{\droptitle}{-4\baselineskip} 

\pretitle{\begin{center}\Huge\bfseries} 
\posttitle{\end{center}} 
\title{Polygons inscribed in Jordan curves with prescribed edge ratios} 
\author{%
\textsc{Yaping Xu}\\[1ex]
\normalsize School of Mathematics, Hunan University, \\
\normalsize Changsha 410082, China; \\ 
\normalsize {email: xuyaping@hnu.edu.cn} 
\and 
\textsc{Ze Zhou} \\[1ex] 
\normalsize College of Mathematics and Statistics, Shenzhen University, \\
\normalsize Shenzhen 518060, China; \\ 
\normalsize{email: zhouze@szu.edu.cn} 
}
\date{} 


\begin{document}

\maketitle


\section{Introduction}

A polygon is an oriented piecewise linear closed curve in some Euclidean space $\mathbb R^k$ $(k\geq 2)$. The polygonal peg problem asks whether every simple closed curve admits an inscribed polygon of a given shape. An especially interesting type of such question is the longstanding square peg problem posed by Toeplitz~\cite{Toeplitz-1911}:
\emph{Does every continuous simple closed curve in the plane contain four points that are the vertices of a square?}
Toeplitz's problem has been solved affirmatively for curves under various regularity conditions by Emch~\cite{Emch-1913,Emch-1915,Emch-1916}, Schnirelman~\cite{Schnirelman-1944}, Jerrard~\cite{Jerrard-1961}, Stromquist~\cite{Stromquist-1989}, Tao~\cite{Tao-2017}, Matschke~\cite{Matschke-2022} and others. Yet the problem has remained open for general continuous curves. We refer the readers to Matschke's paper~\cite{Matschke-2014} for a survey together with an extensive list of further references on this problem.

Meanwhile, there was plenty of research on other types of quadrilaterals inscribed in given Jordan curves. Vaughan proved a celebrated result that every simple closed curve in the plane has an inscribed rectangle~\cite{Meyerson-1981}. Subsequently, additional notable progress with respect to edge ratios of rectangles inscribed in smooth Jordan curves appeared in the works of Matschke~\cite{Matschke-2014}, Hugelmeyer~\cite{Hugelmeyer-2021} and others. The latest development is that Greene-Lobb~\cite{Greene-2021} proved rectangles with arbitrary edge ratios can be inscribed in any smooth simple closed curve in the plane. In addition, Makeev~\cite{Makeev-1995}, Akopyan-Avvakumov~\cite{Akopyan-2018} and Matschke~\cite{Matschke-2021} studied the more general problem of finding inscribed cyclic quadrilaterals for some classes of simple closed curves, which was further extended by Greene-Lobb~\cite{Greene-2023} to including all smooth Jordan curves. For an account of further results, see the recent article of Schwartz~\cite{Schwartz-2022}. 

On the other hand, many interesting results were established on triangles inscribed in given Jordan curves. This is a bit easier to deal with than the cases of quadrilaterals, since the number of points to grapple with is only three. In fact, Meyerson~\cite{Meyerson-1980} and Kronheimer-Kronheimer~\cite{Kronheimer-Kronheimer-1981} proved the existence of triangles inscribed in any simple closed curve and similar to any given triangle. Moreover, Nielsen~\cite{Nielsen-1992} extended this result considerably by showing that the set of vertices of such triangles is dense in the given curve.

The above research was devoted to finding desired triangles and quadrilaterals up to similarities. For polygons with more than four edges, analogous results may not hold because the solution system is over-determined. For instance, it is easy to show an ellipse does not inscribe any cyclic $n$-gon with $n\geq5$.

A natural problem is to relax the requirement of polygon's angles. Precisely, one asks the following question: \emph{Does every simple closed curve have an inscribed polygon with any prescribed edge ratio? }When it is a circle, the problem was solved by the following theorem due to Penner~\cite{Penner-1987}. Mention that relevant results also appeared in works of Pinelis~\cite{Pinelis-2005}, Schlenker~\cite{Schlenker-2007} and Kou\v{r}imsk\'{a}-Skuppin-Springborn~\cite{Kour-Sku-Springborn-2016}. Let $|A_iA_j|$ be the Euclidean length of the line segment that connects $A_i$ and $A_j$. 

\begin{theorem}[Penner]\label{T-1-1}
Given $n$ $(n\geq 3)$ positive reals $a_1,\cdots,a_n$, each of which is less than the sum of the others, then there exists a convex polygon $\mathrm{Q}_n=A_0\cdots A_{n-1}$ inscribed in the unit circle  such that
the vector of distances
\[
\displaystyle{
|A_0A_1|,\,|A_1A_2|,\,\cdots,\,|A_{n-2}A_{n-1}|,\,|A_{n-1} A_0|
}
\]
is proportional to $(a_1,\cdots,a_n)$. Moreover, $\mathrm{Q}_n$ is unique up to isometries.
\end{theorem}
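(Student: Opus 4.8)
The plan is to reduce the statement to the purely one-dimensional problem of locating the circumradius, and then to read off the polygon. I would fix a circle of radius $R$ and recall that a chord of length $a_i$ subtends a central angle equal to either the minor-arc value $\alpha_i(R):=2\arcsin\frac{a_i}{2R}\in(0,\pi]$ or the major-arc value $2\pi-\alpha_i(R)$. A convex polygon inscribed in this circle with the prescribed side lengths in the prescribed cyclic order exists exactly when the corresponding central angles can be chosen to sum to $2\pi$; since these angles are positive and total $2\pi$, at most one of them can exceed $\pi$, so there are precisely two admissible configurations. In the all-minor configuration the centre lies inside the polygon and
\[
\sum_{i=1}^n \alpha_i(R)=2\pi,
\]
while in the other the longest side $a_k$ takes its major arc (centre outside) and
\[
\alpha_k(R)=\sum_{i\neq k}\alpha_i(R).
\]
First I would record the realizability constraint $R\ge R_0:=\tfrac12\max_i a_i$, which is exactly what makes every chord exist.

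The key device is the substitution $\theta=\arcsin\frac{a_k}{2R}\in(0,\tfrac\pi2]$, i.e. $R=\frac{a_k}{2\sin\theta}$, a decreasing bijection from $(0,\pi/2]$ onto $[R_0,\infty)$. Writing $c_i=a_i/a_k\in(0,1]$, both closure conditions become statements about the single function
\[
P(\theta):=\sum_{i=1}^n \arcsin\bigl(c_i\sin\theta\bigr),
\]
namely $P(\theta)=\pi$ (centre inside) and $P(\theta)=2\theta$ (centre outside). I would then establish the two analytic facts on which everything hinges: $P$ is strictly increasing on $[0,\pi/2]$, and — the crucial point — each summand $\theta\mapsto\arcsin(c_i\sin\theta)$ is concave, so $P$ is concave. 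The concavity I would obtain by writing the square of the summand's derivative as $c_i^2(1-\sin^2\theta)/(1-c_i^2\sin^2\theta)$ and checking it is a decreasing function of $\sin^2\theta$.

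With these in hand, existence and uniqueness follow from the intermediate value theorem together with concavity, splitting on the sign of $P(\pi/2)-\pi$, where $P(\pi/2)=\tfrac\pi2+\sum_{i\neq k}\arcsin c_i$. If $P(\pi/2)\ge\pi$, monotonicity produces a unique $\theta^\ast$ with $P(\theta^\ast)=\pi$, and concavity (the graph of $P$ lies above the chord joining its endpoints) shows $P(\theta)=2\theta$ has no interior solution, so the centre-inside polygon is the only one. If $P(\pi/2)<\pi$, I would set $Q(\theta)=P(\theta)-2\theta$ and note $Q(0)=0$, $Q(\pi/2)<0$, and — here the hypothesis enters decisively — $Q'(0)=P'(0)-2=\frac1{a_k}\sum_i a_i-2=\frac1{a_k}\bigl(\sum_{i\neq k}a_i-a_k\bigr)>0$ by the strict polygon inequality; concavity of $Q$ then forces exactly one sign change on $(0,\pi/2)$, giving a unique root.

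Finally I would reconstruct $\mathrm{Q}_n$ by placing $A_0,\dots,A_{n-1}$ on the circle of radius $R^\ast$ at the cumulative central angles determined by $\theta^\ast$; vertices in cyclic order whose central angles sum to $2\pi$ automatically form a simple convex polygon with the correct side lengths, and since $R^\ast$, the inside/outside alternative, and the ordered sequence of central angles are all forced, the polygon is rigid up to rotation, reflection and translation, i.e.\ unique up to isometry. The main obstacle is the uniqueness: within it, the concavity lemma for $P$ is what rules out extra solutions, and the observation that the strict polygon inequality is precisely the statement $Q'(0)>0$ is what lets it apply; making the dichotomy between the two configurations dovetail — so that exactly one of them ever yields a solution — is the part demanding the most care.
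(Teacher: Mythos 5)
Your argument is correct, but there is nothing in the paper to compare it against: the paper states this result as Penner's theorem and cites \cite{Penner-1987} (see also \cite{Pinelis-2005,Kour-Sku-Springborn-2016}) without reproducing a proof, using it only as the base case for the degree computation in \Cref{T-3-8}. On its own merits, your reduction is the standard elementary one (closest in spirit to Pinelis's treatment, and quite different from Penner's original argument via decorated Teichm\"uller theory): the dichotomy ``centre inside versus centre outside'' is correctly encoded by $P(\theta)=\pi$ versus $P(\theta)=2\theta$ with $\theta=\arcsin\bigl(a_k/(2R)\bigr)$, the concavity computation for $\arcsin(c\sin\theta)$ checks out, and the identification of the strict polygon inequality with $Q'(0)>0$ is exactly where the hypothesis must enter. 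The two points that deserve an explicit sentence in a written version are (i) the borderline case $P(\pi/2)=\pi$, where the chord comparison gives only $P(\theta)\ge 2\theta$; you need the observation that equality at an interior point would force $P$ to be linear, hence all $c_i=1$ and $P(\pi/2)=n\pi/2>\pi$, a contradiction, so the two configurations never coexist; and (ii) the uniqueness step in Case 2, where ``concavity forces exactly one sign change'' should be spelled out as: two interior zeros of the concave function $Q$ with $Q(0)=0$ would make $Q$ vanish identically on an initial segment, contradicting $Q'(0)>0$. With those details filled in, the proof is complete and correctly yields both existence and uniqueness up to isometry.
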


For other simple closed curves, to the best of our knowledge, few results have been obtained except that Milgram~\cite{Milgram-1943}, Meyerson~\cite{Meyerson-1980}, Wu~\cite{Wu-2004} and Makeev~\cite{Makeev-2005} investigated the existence of edge-regular polygons inscribed in certain smooth curves.

In this paper we establish the following result which extends the existence part of Penner's theorem.



\begin{theorem}\label{T-1-2}
Let $J$ be an oriented simple closed curve in ${\mathbb R}^k$ $(k\geq 2)$ that is differentiable with non-zero derivative at  $A_0\in J$. Let $a_1,\cdots,a_n$ $(n\geq 3)$ be positive reals, each of which is less than the sum of the others. Then there exists a tuple of ordered points $A_1,\cdots,A_{n-1}$ on $J$ after $A_0$ such that the vector of distances
\[
\displaystyle{
|A_0A_1|,\,|A_1A_2|,\,\cdots,\,|A_{n-2}A_{n-1}|,\,|A_{n-1} A_0|
}
\]
is proportional to $(a_1,\cdots,a_n)$.
\end{theorem}


Remark that this theorem includes the case that $J$ is a knotted curve. Meanwhile, taking $n=3$, we have the following consequence closely related to the works of Meyerson~\cite{Meyerson-1980}, Kronheimer-Kronheimer~\cite{Kronheimer-Kronheimer-1981},  Nielsen~\cite{Nielsen-1992} and Gupta-Salzedo~\cite{Gupta-2021}.

\begin{corollary}\label{C-1-5}
Let $J$ be an oriented simple closed curve in ${\mathbb R}^k$ $(k\geq 2)$ that is differentiable with non-zero derivative at a point $A_0\in J$. For any triangle $T$, there exist $A_1,A_2\in J$ such that  $T'=A_0A_1A_2$ forms a triangle similar to $T$.
\end{corollary}

\begin{figure}[htbp]
\centering
 \begin{minipage}[t]{0.48\textwidth}
  \centering
  \begin{tikzpicture}[scale=2.3]
    \draw (-0.9,0) -- (1,0) arc (0:180:1 and 1) -- cycle;
    \path  [draw=black,postaction={on each segment={mid arrow=black}},dashed]
     (0,1)--(-0.5,0)--(-0.08,0)--(0.62,0)--cycle;
    \draw [fill=black] (0,1) circle [radius=0.5pt] node [above] {$A_0$};
    \draw [fill=black] (-0.5,0) circle [radius=0.5pt] node [below] {$A_1$};
    \draw [fill=black] (-0.08,0) circle [radius=0.5pt] node [below] {$A_2$};
    \draw [fill=black] (0.62,0) circle [radius=0.5pt] node [below] {$A_3$};
    \end{tikzpicture}
 \end{minipage}
 \begin{minipage}[t]{0.48\textwidth}
  \centering
  \begin{tikzpicture}[font=\footnotesize,scale=2]
    \draw plot [smooth cycle] coordinates
    {(3.66,0.48)(3.4,0.6)(3.15,0.67)(2.95,0.68)(2.72,0.69)
    (2.48,0.62)(2.48,0.36)(2.6,0.11)(2.77,-0.02)(3.02,-0.09)(3.33,-0.12)(3.51,-0.15)(3.7,-0.19)
    (3.77,-0.28)(3.78,-0.43)(3.62,-0.55)(3.37,-0.54)(3.14,-0.49)(2.86,-0.42)(2.64,-0.43)(2.5,-0.53)
    (2.52,-0.65)(2.74,-0.83)(3.03,-0.96)(3.35,-1.01)(3.69,-0.97)(3.94,-0.84)(4.2,-0.6)(4.3,-0.34)(4.28,-0.08)(4.06,0.28)(3.83,0.4)};
    \path  [draw=black,postaction={on each segment={mid arrow=black}},dashed]
     (3.66,0.48)--(2.48,0.62)--(3.78,-0.43)--(2.52,-0.65)--cycle;
    \draw [fill=black] (3.66,0.48) circle [radius=0.4pt];
    \draw [fill=black] (2.48,0.62) circle [radius=0.4pt];
    \draw [fill=black] (3.78,-0.43)circle [radius=0.4pt];
    \draw [fill=black] (2.52,-0.65) circle [radius=0.4pt];
    \draw (3.75,0.47) node [above] {$A_0$};
    \draw (2.54,0.72) node [left] {$A_1$};
    \draw (3.75,-0.47) node [right] {$A_2$};
    \draw (2.6,-0.74) node [left] {$A_3$};
\end{tikzpicture}
 \end{minipage}
\caption{Inscribed polygons may have collinear vertices or cross points}
\label{fig2}
\end{figure}
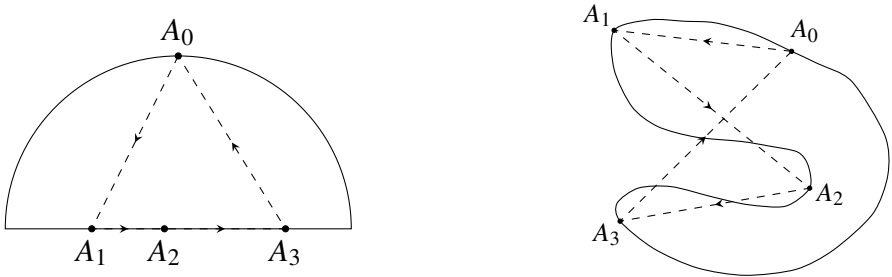

As in Figure~\ref{fig2}, an inscribed polygon $Q_n=A_0A_1\cdots A_{n-1}$ possibly presents complicated patterns. Fortunately, when $J$ is a strictly convex curve on the Euclidean plane $\mathbb R^2$, the polygon will be convex. Specifically, we have the following result.

\begin{theorem}\label{C-1-4}
The polygon $Q_n=A_0A_1\cdots A_{n-1}$ in Theorem~\ref{T-1-2} is convex provided  $J$ is a strictly convex curve in $\mathbb R^2$.
\end{theorem}

We hope to count the number of all ordered inscribed polygons starting at $A_0$ with a given edge ratio. Let
\[
\Delta^{n-1}=\{(a_1,\cdots,a_n)\in \mathbb{R}_+^n\,\big|\,a_1+\cdots+a_n=1\}
\]
and let $W$ consist of vectors $a=(a_1,\cdots,a_n)\in \Delta^{n-1}$ satisfying
\[
\displaystyle{
a_j<\sum_{i=1,i\neq j}^n a_i
}
\]
for $j=1,\cdots,n$. Apparently, $W$ is a non-empty open subset of $\Delta^{n-1}$. The following result provides a partial answer to this question.

\begin{theorem}\label{T-1-6}
Let $J$ be an oriented  simple closed $C^1$ curve in $\mathbb R^k$ $(k\geq2)$. For any $A_0\in J$ and for almost every $(a_1,\cdots,a_n) \in W$, there are at most finitely many ordered polygons inscribed in $J$ starting at $A_0$ with the vector of side-lengths proportional to $(a_1,\cdots,a_n)$.
\end{theorem}

The paper is organized as follows: In next section we prove a lemma concerning  polygonal curve inscribed in simple continuous arcs. We use the simplex $\Delta^{n-1}$ to parameterize all candidate inscribed polygonal curves and reduce the proof to determining the topological degree of the associated test map. In Section~\ref{S-3} we give proofs of the mains theorems. The key ingredient of this part is showing Theorem~\ref{T-1-2} by exhausting the given Jordan curve via a sequence of simple continuous arcs. Under the assumption that $J$ has a non-vanishing derivative at $A_0$, we assert that the resulting inscribed polygonal curve converges to a desired polygon. In Section~\ref{S-4}, we pose some questions for further developments.

Throughout this paper,  we use $o(\cdot)$ to indicate that the decay rate of a certain function or sequence is faster than that of another function or sequence.

\section{Polygonal curves inscribed in simple arcs}
We begin with a lemma regarding polygonal curves inscribed in simple arcs via the "configuration space/test map" scheme. Remark that similar methods have played significant roles in many problems in discrete geometry. See, for example, the works of Schnirelman~\cite{Schnirelman-1944}, Vre\'{c}ica-\v{Z}ivaljevi\'{c}~\cite{Vre-2011}, Matschke~\cite{Matschke-2011}, Akopyan-Karasev~\cite{Akopyan-2013}, Liu-Zhou~\cite{Liu-Zhou-2015}, Cantarella-Denne-McCleary~\cite{Cantarella-2022} and many others. 
\begin{lemma}\label{L-2-1}
Let $\gamma:[p,q]\to\mathbb R^k$ $(k\geq 2)$ be a simple continuous curve with $\gamma(p)\neq \gamma(q)$. Let $(l_1,\cdots,l_n)$ be a vector of positive reals. Then there exist
\[
p=s_0<s_1<\cdots<s_{n-1}<s_n=q
\]
such that the vector of distances
\[
|A_0A_1|,\,|A_1A_2|,\,\cdots,\,|A_{n-1}A_n|
\]
is proportional to $(l_1,\cdots,l_n)$, where $A_i=\gamma(s_i)$ for $i=0,1,\cdots,n$.
\end{lemma}

\begin{figure}[htbp]
\centering
\begin{tikzpicture}[scale=0.6]
\tikzstyle{every node}=[font=\small];
  \draw  [pattern color=lightgray] plot [smooth] coordinates {(7.79, 3.23)(7.18, 3.98)(6.55, 4.56)(5.38,5.26)(4.37,5.51)(3.35, 5.46)(2.25, 4.29)(1.35, 3.88)(0.35, 3.78)(-0.64, 3.9)(-1.39, 4.23)(-2.26, 4.78)(-3.07, 5.17)(-4.22, 5.41)(-5.13, 4.99)(-5.63, 4.33)(-6.03, 3.41)};
  \draw [fill=black] (7.79, 3.23) circle [radius=1.4pt] node [right] {$A_4$};
  \draw [fill=black] (4.37,5.51) circle [radius=1.4pt] node [above] {$A_3$};
  \draw [fill=black] (0.35, 3.78) circle [radius=1.4pt] node [above] {$A_2$};
  \draw [fill=black] (-2.26, 4.78) circle [radius=1.4pt] node [above] {$A_1$};
  \draw [fill=black] (-6.03, 3.41) circle [radius=1.4pt] node [left] {$A_0$};
  \draw [dashed] (7.79, 3.23)--(4.37,5.51)--(0.35, 3.78)--(-2.26, 4.78)--(-6.03, 3.41);
\end{tikzpicture}
\caption{Polygonal curve inscribed in a simple arc}
\label{figure}
\end{figure}
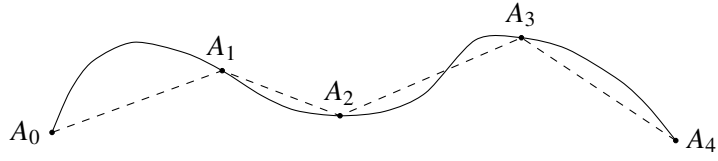

\begin{proof}
Recall the simplex
\[
\Delta^{n-1}=\{(\mu_1,\cdots,\mu_n)\,\big|\,\mu_1,\cdots,\mu_n>0,\,\mu_1+\cdots+\mu_n=1\}.
\]
For $\mu=(\mu_1,\cdots,\mu_n)\in\Delta^{n-1}$, let
\[
s_0=p,\,s_1=p+\sigma_1(q-p),\,\cdots,\,s_i=p+\sigma_i(q-p),\,\cdots,\,s_n=q,
\]
where
\[
\sigma_i=\mu_1+\cdots+\mu_i
\]
for $i=1,\cdots,n$. Setting $A_i=\gamma(s_i)$, we then define a test map $f_\gamma:\Delta^{n-1}\to\Delta^{n-1}$ as follows:
\[
f_\gamma(\mu)=\frac{(|A_0A_1|,\cdots,|A_{n-1}A_n|)}{\sum_{i=1}^{n}|A_{i-1}A_{i}|}.
\]
Note that
\[
\sum_{i=1}^{n}|A_{i-1}A_{i}|\geq |A_0A_n|=|\gamma(p)-\gamma(q)|>0.
\]
Hence $f_\gamma$ is well-defined. Meanwhile, the following Proposition~\ref{P-2-2} implies $f_\gamma$ is surjective. In particular, the vector
\[
\frac{(l_1,\cdots,l_n)}{l_1+\cdots+l_n}\in\Delta^{n-1}
\]
is in the image of $f_\gamma$, which completes the proof.
\end{proof}

\begin{proposition}\label{P-2-2}
$f_\gamma$ is a surjective map.
\end{proposition}
\begin{proof}
 Let $\{\mu^m\}\subset \Delta^{n-1}$ be a sequence approaching the faces. It is easy to see the image sequence $\{f_\gamma(\mu^m)\}$ also approaches the faces of $\Delta^{n-1}$. That means $f_\gamma$ is a proper map. Hence the topological degree of  $f_\gamma$ is well-defined. For $t\in[0,1]$, we define
\[
H(t,\mu)=t\mu+(1-t)f_\gamma(\mu).
\]
Note that $H$ forms a homotopy from $f_\gamma$ to the identity map. Moreover, for each fixed $t\in[0,1]$,  $H(t,\cdot)$  takes faces of $\Delta^{n-1}$ to faces, which implies the homotopy is proper. Due to the homotopy invariance property (see, e.g., ~\cite{Guillemin-2010,Hirsch-1976,Milnor-1997}), one obtains
\[
\deg(f_\gamma)=\deg(\mathrm{id})=1.
\]
As a result, $f_\gamma$ is surjective.
\end{proof}

\begin{remark}
An alternative approach to the above proposition is using the Knaster-Kuratowski-Mazurkiewicz lemma~\cite{Knaster-1929}. For this goal,  one needs to continuously extends $f_\gamma$ to map from $\overline{\Delta}_{n-1}$ to $\overline{\Delta}_{n-1}$ and construct a suitable KKM covering of  $\overline{\Delta}_{n-1}$.
\end{remark}

\section{Proofs of the main theorems}\label{S-3}
We hope to deduce Theorem~\ref{T-1-2} from Lemma~\ref{L-2-1}. To this end, we will take a sequence of simple arcs exhausting the given simple closed curve $J$ and then show the resulting sequence of inscribed polygonal curves converges to a non-degenerating polygon.

\begin{proof}[\textbf{Proof of Theorem~\ref{T-1-2}}]
Let $\gamma:[0,1]\to\mathbb R^k$ be a parametric representation of $J$ such that $A_0=\gamma(0)=\gamma(1)$ and $\gamma'(0)=\gamma'(1)\neq \vec{0}$. Now we apply Lemma~\ref{L-2-1} to the restriction of $\gamma$ to the segment $[p_m,q_m]\subset(0,1)$, where $p_m \to 0$ and $q_m\to 1$ as $m\to\infty$. We obtain
\[
p_m=s_0^m<s_1^m<\cdots<s_{n-1}^m<s_n^m=q_m
\]
such that the vector of distances
\[
|A^m_0A^m_1|,\,|A^m_1A^m_2|,\,\cdots,\,|A^m_{n-1}A^m_n|
\]
is proportional to $a=(a_1,\cdots,a_n)$, where $A^m_i=\gamma(s^m_i)$ for $i=0,1,\cdots,n$. After passing to a subsequence, we may assume that
\[
(s_0^m,s_1^m,\cdots,s_n^m)\to(s_0^\ast,s_1^\ast,\cdots,s_n^\ast).
\]

Let $A_i^\ast=\gamma(s_i^\ast)$ for $i=0,\cdots,n$. We claim $A_0^\ast A_1^\ast\cdots A_{n-1}^\ast$ is the desired polygon. Specifically, we need to check that $s_{i-1}^\ast<s_i^\ast$ for $i=1,2\cdots,n$. Otherwise, suppose there exists $i_0\in\{1,\cdots,n\}$ such that $s_{i_0-1}^\ast=s_{i_0}^\ast$. Then 
\[
|A^m_{i_0-1}A^m_{i_0}|\to0.
\]
Recall that the vector of distances
\[
|A^m_0A^m_1|,\,|A^m_1A^m_2|,\,\cdots,\,|A^m_{n-1}A^m_n|
\]
is proportional to $a=(a_1,\cdots,a_n)$. One obtains
\[
|A^m_{i-1}A^m_{i}|\to0
\]
for $i=1,\cdots,n$. Namely, the inscribed polygon  $A_0^\ast A_1^\ast\cdots A_{n-1}^\ast$ degenerates to the point $A_0=\gamma(0)=\gamma(1)$.

Without loss of generality, we now assume $s_{i}^\ast=0$ for $i<l_0$ and $s_i^\ast=1$ for $i\geq l_{0}$. By Taylor's formula, we get
\[
A_i^m=\gamma(s_i^m)=\gamma(0)+\gamma'(0)s_i^m+o(|s_i^m|)
\]
for $i=0,\cdots,l_0-1$. Hence
\[
|A_0A_0^m|=|\gamma'(0)|s_0^m+o(|s_0^m|).
\]
Moreover, for $i=1,\cdots,l_0-1$, we are ready to see
\[
|A_{i-1}^mA_{i}^m|=|\gamma'(0)|(s_i^m-s_{i-1}^m)+o(|s_i^m|).
\]
As a result,
\[
|A_0A_0^m|+\sum_{i=1}^{l_0-1}|A_{i-1}^mA_{i}^m|
=|\gamma'(0)|s_{l_0-1}^m+o(|s_{l_0-1}^m|).
\]
Similarly, one obtains
\[
\sum_{i=l_0+1}^{n}|A_{i-1}^mA_{i}^m|+|A_n^m A_0|=|\gamma'(1)|(1-s_{l_0}^m)+o(|1-s_{l_0}^m|).
\]
In addition,
\[
|A_{l_0-1}^mA_{l_0}^m|=|\gamma'(0)s_{l_0-1}^m+\gamma'(1)(1-s_{l_0}^m)|+o(|s_{l_0-1}^m|+|1-s_{l_0}^m|),
\]
Because $\gamma'(0)=\gamma'(1)\neq \vec{0}$, combining the above relations gives
\[
\lim_{m\to\infty}\bigg(\sum_{i=1,i\neq l_0}^n|A_{i-1}^mA_i^m|+|A_0A_0^m|+|A_n^m A_0|\bigg)\,\big/\,|A_{l_0-1}^mA_{l_0}^m|= 1.
\]
Note that
\[
\bigg(\sum_{i=1,i\neq l_0}^n|A_{i-1}^mA_i^m|\bigg)\,\big/\,|A_{l_0-1}^mA_{l_0}^m|=\bigg(\sum_{i=1,i\neq l_0}^n a_i\bigg)\,\big/\,a_{l_0}>1.
\]
It follows that
\[
\lim_{m\to\infty}\frac{|A_0A_0^m|+|A_n^m A_0|}{|A_{l_0-1}^mA_{l_0}^m|}<0,
\]
which leads to a contradiction. We thus finish the proof.

\end{proof}



In some respects, the inscribed polygon plays an analogous role to the weak solutions in PDE theory. Fortunately, under certain circumstances we are able to regularize the weak solutions to classical ones. 

\begin{proof}[\textbf{Proof of Theorem~\ref{C-1-4}}]
For simplicity, we identify the Euclidean plane $\mathbb R^2$ with the complex plane $\mathbb C$ and use $\operatorname{Arg}(\cdot)$ to denote the argument of a non-zero complex number.

Let $Q_n=A_0A_1\cdots A_{n-1}$ be an ordered polygon inscribed in $J$. Without loss of generality, we assume  the vertices $A_0,\cdots,A_{n-1}$  are arranged on $J$ in counter-clockwise order.
It suffices to check $Q_n$ is a convex polygon. Notice that $A_0,\cdots,A_{n-1}$ are distinct complex numbers. Let us choose the branch of the argument function $\operatorname{Arg}(\cdot)$ that ranges in $[0,2\pi)$. One needs to show
\[
\displaystyle{
\operatorname{Arg}\left(\frac{A_{i-1}-A_i}{A_{i+1}-A_i}\right)<\pi
}
\]
for $i=1,\cdots,n-1$. Indeed, when $J$ is strictly convex and $A_0,\cdots, A_{n-1}$ are orderly arranged on $J$, it is easy to see
\[
\displaystyle{
\operatorname{Arg}\left(\frac{A_{i-1}-A_i}{A_{i+1}-A_i}\right)=\alpha_i<\pi,
}
\]
where $\alpha_i$ is the inner angle of  triangle $A_{i-1}A_iA_{i+1}$ at $A_i$.
The theorem is proved.
\end{proof}

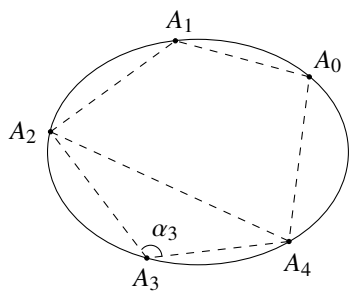
\begin{figure}[htbp]
\centering
\begin{tikzpicture}[font=\footnotesize]
\draw (0,-2) ellipse (2 and 1.5);
\draw (1.4,-0.8) node[right] {$A_0$};
\draw (-0.23,-0.55) node[above] {$A_1$};
\draw (-2.32,-2.03) node[above] {$A_2$};
\draw (-0.68,-3.4) node[below] {$A_3$};
\draw (1,-3.5) node[right] {$A_4$};
\draw [fill=black] (1.48,-1) circle [radius=0.8pt];
\draw [fill=black] (-0.3,-0.52) circle [radius=0.8pt];
\draw [fill=black] (-1.96,-1.73) circle [radius=0.8pt];
\draw [fill=black] (-0.68,-3.41) circle [radius=0.8pt];
\draw [fill=black] (1.21,-3.19) circle [radius=0.8pt];
\draw [dashed](1.48,-1)--(-0.3,-0.52)--(-1.96,-1.73)--(-0.68,-3.41)--(1.21,-3.19)--cycle;
\draw [dashed](-1.96,-1.73)--(1.22,-3.19);
\draw (-0.48,-3.39) arc (0:143:0.15);
\draw (-0.43,-3.32) node [above] {$\alpha_3$};
\end{tikzpicture}
\caption{Polygon inscribed in a strictly convex curve in the Euclidean plane}
\label{fig4}
\end{figure}

We now establish the finiteness property through the Regular Value Theorem and Sard's Theorem.

\begin{proof}[\textbf{Proof of Theorem~\ref{T-1-6}}] Without loss of generality, we assume $\gamma:[0,1]\to \mathbb R^k$ is a $C^1$ parametric representation of $J$ such that $\gamma(0)=\gamma(1)=A_0$. As before, for each $\mu=(\mu_1,\cdots,\mu_n)\in\Delta^{n-1}$, we set
\[
\sigma_i=\sum\limits_{j=1}^i\mu_j,\quad A_i=\gamma(\sigma_i)
\]
for $i=1,\cdots,n$ and define a test map $g_\gamma:\Delta^{n-1}\to W\subset \Delta^{n-1}$ as follows:
\[
g_\gamma(\mu)=\frac{(|A_0A_1|,\cdots,|A_{n-1}A_n|)}{\sum_{i=1}^{n}|A_{i-1}A_{i}|}.
\]

Since $\gamma$ is a $C^1$ map, it is easy to see $g_\gamma$ is continuous differentiable. Let $M_0$ denote the set of critical points of  $g_\gamma$. By Sard's Theorem (see, e.g., ~\cite{Guillemin-2010,Hirsch-1976,Milnor-1997}), the set
$W_0:=g_{\gamma}(M_0)$
has measure zero. Therefore, $W\setminus W_0$ is a subset of $W$ with full measure. Recall that
\[
\displaystyle{
\dim(\Delta^{n-1})=\dim(W)=n-1.
}
\]
For $a\in W\setminus W_0$, due to the Regular Value Theorem,  $g^{-1}_{\gamma}(a)$ is a discrete set. Meanwhile, a similar argument to Theorem~\ref{T-1-2} implies $g_\gamma$ is a proper map. That means $g^{-1}_{\gamma}(a)\subset \Delta^{n-1}$ is compact. Consequently, $g^{-1}_{\gamma}(a)$ is a finite point set, which completes the proof.
\end{proof}

\section{Some questions}\label{S-4}
\noindent
The paper leaves the following problems for further investigation.

\begin{itemize}

\item[$1.$] Does there exist a polygon inscribed in any simple closed curve with any prescribed edge ratio?  When $J$ is a plane curve and $n=3$, the problem was affirmatively solved by Meyerson~\cite{Meyerson-1980}, Kronheimer-Kronheimer~\cite{Kronheimer-Kronheimer-1981} and Nielsen~\cite{Nielsen-1992}. For $n>3$, one might attempt to approach the simple closed curve $J$ with a sequence of smooth curves $\{J^{(m)}\}$. Let $Q^{(m)}$ be a polygon inscribed in $J^{(m)}$ realizing the prescribed edge ratio. The limit of a convergent subsequence of $\{Q^{(m)}\}$ is then polygon $Q$ inscribed in $J$ with sides of lengths $\mu a_1,\cdots,\mu a_n$ for some $\mu \geq 0$. The difficulty is that $Q$ might degenerate in the sense that all of its vertices are at the same point of $J$.

\medskip
\item[$2.$] Under what conditions is the required polygon in Theorem~\ref{T-1-2} unique? Penner's theorem (Theorem~\ref{T-1-2}) asserts the uniqueness in case that $J$ is a circle. Moreover, for $C^1$ curves Theorem~\ref{T-1-6} indicates that generically the solution space is a finite point set. On the other hand, if $J$ includes the union of a circular arc and its center, then  $J$ has infinite inscribed triangles similar to some given shape. Apart from circles, we are looking for more simple closed curves so that the uniqueness still holds.

\end{itemize}

\section*{Acknowledgments}

Research of Y. Xu was supported by NSFC grant (No. 11631010) and the Postgraduate Scientific Research Innovation Project of Hunan Province (No. CX20210397). Research of Z. Zhou was supported by NSFC grant (No. 11631010).

\end{document}